\documentclass[12pt,reqno]{amsart}
\usepackage{amsfonts,amssymb,amsbsy,amsmath}

\topmargin 1cm
\advance \topmargin by -\headheight
\advance \topmargin by -\headsep
     
\setlength{\paperheight}{270mm}%
\setlength{\paperwidth}{192mm}%
\textheight 22.5cm
\oddsidemargin 1cm
\evensidemargin \oddsidemargin
\marginparwidth 1.25cm
\textwidth 14cm
\setlength{\parskip}{0.05cm}

\newtheorem{theorem}{Theorem}[section]
\newtheorem{lemma}[theorem]{Lemma}

\theoremstyle{definition}

\theoremstyle{remark}

\numberwithin{equation}{section}

\newcommand{\mmod}[1]{\,\,(\text{mod}\,\,#1)}

\def\bfx{{\mathbf x}}
\def\bfy{{\mathbf y}}

\def\calM{{\mathcal M}}

\def\dbN{{\mathbb N}}

\def\dbZ{{\mathbb Z}}\def\dbQ{{\mathbb Q}}

\def\alp{{\alpha}}

\def\tet{{\theta}}

\def\le{\leqslant} \def\ge{\geqslant}

\begin{document}
\title[Artin's Conjecture]{Artin's Conjecture and systems\\ of diagonal equations}
\author[T. D. Wooley]{Trevor D. Wooley}
\address{School of Mathematics, University of Bristol, University Walk, Clifton, Bristol BS8 1TW, United Kingdom}
\email{matdw@bristol.ac.uk}
\subjclass[2010]{11D88, 11D72, 11E76}
\keywords{Local solubility, diagonal equations, Artin's Conjecture}
\date{}
\begin{abstract} We show that Artin's conjecture concerning $p$-adic solubility of Diophantine equations fails for infinitely many systems of $r$ homogeneous diagonal equations whenever $r\ge 2$.\end{abstract}
\maketitle

\section{Introduction} A modern formulation of Artin's Conjecture (AC) asserts that in the $p$-adic field $\dbQ_p$, any
 collection of forms $F_1,\ldots ,F_r\in \dbQ_p[x_1,\ldots ,x_s]$ of respective degrees $d_1,\ldots ,d_r$ must possess a 
common non-trivial zero provided only that $s>d_1^2+\ldots +d_r^2$. This is in fact a variant of a special case of the
 conjecture made by E. Artin (see \cite[Preface page x]{Art1965} and \cite{Lan1952} for more on the subtext to this
 statement). The history of this conjecture is remarkably rich. For the purpose at hand, it suffices to note that AC is known to
 hold whenever $p$ is sufficiently large in terms of the degrees of the forms at hand (see \cite{AK1965,Bro1978}), yet fails
 spectacularly for each prime $p$ (see \cite{AK1981,Bro1984,LM1983,Ter1966}). The conjecture is known to hold, however, for
 systems consisting of a single quadratic or cubic form, a pair of quadratic forms, and also in the case of a single diagonal form
 of any degree (see \cite{DL1963,Dem1956,Has1924,Lew1952}). The latter conclusion has prompted speculation
 that AC might be salvaged for systems of diagonal forms. Workers intimately familiar with the argument underlying
 \cite{LM1983} have long been aware that AC cannot hold for certain systems of diagonal forms in which sufficiently many 
distinct degrees of suitable size occur, although this observation does not seem to be particularly well-documented in the
 literature. However, for larger degrees such counter-examples to AC necessarily contain very many forms. Our goal in this note 
is to show that counter-examples to AC exist in abundance for systems of two or more diagonal forms of differing degrees.

\begin{theorem}\label{theorem1.1} Let $p$ be a prime number. Then whenever $r\ge 2$, there are infinitely many $r$-tuples 
$(d_1,\ldots ,d_r)$ having the property that Artin's Conjecture fails over $\dbQ_p$ for a system of diagonal forms
 of respective degrees $d_1,\ldots ,d_r$.
\end{theorem}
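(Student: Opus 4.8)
The plan is to exhibit, for suitable degrees $d_1,\ldots ,d_r$, an explicit system of diagonal forms
\[
F_k(\bfx)=\sum_{j=1}^{s}a_{k,j}x_j^{d_k}\qquad(1\le k\le r)
\]
over $\dbQ_p$ in $s>d_1^2+\cdots +d_r^2$ variables admitting only the trivial common $p$-adic zero. Any such system witnesses the failure of AC for the tuple $(d_1,\ldots ,d_r)$, so the theorem reduces to constructing one such system and then producing infinitely many admissible tuples.

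First I would convert solubility into a combinatorial anisotropy problem. Restricting to degrees divisible by $p-1$, the map $u\mapsto u^{d_k}$ carries every unit of $\dbZ_p$ into $1+p\dbZ_p$, so modulo $p$ each form collapses to the subset sum of those coefficients $a_{k,j}$ whose variables are $p$-adic units. Writing a putative primitive zero in terms of the valuations $v_p(x_j)$ and unit parts and performing a descent on valuations (normalise so that some $x_j$ is a unit, reduce modulo $p$, divide by $p$, repeat), the existence of a nontrivial common zero becomes the question of whether the coefficient data admits a cancellation pattern compatible with all $r$ congruences at once. The key structural input here is the order $(p-1)p^{v_p(d_k)}$ of $\dbZ_p^{\times}/(\dbZ_p^{\times})^{d_k}$, which governs how unit parts sitting at a fixed valuation may combine.

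Next I would arrange the variables into a tower of valuation levels and choose the $a_{k,j}$ as prescribed powers of $p$ so that, level by level, each form is anisotropic modulo $p$ in the subset-sum sense above. The essential feature, available precisely because the degrees differ, is that scaling a variable by $p$ raises its contribution by $d_k$ levels in $F_k$; for distinct degrees these shifts are incommensurate, so a single tower of variables, counted once toward $s$, is simultaneously constrained in all $r$ forms but with different periods. Carrying the capacity count through the descent, the number of variables one may retain while preserving anisotropy is controlled not by $\sum_k d_k^2$ but by this joint multi-period structure, and the claim to be verified is that for differing degrees of suitable size the joint capacity strictly exceeds $\sum_k d_k^2$.

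The main obstacle is controlling the \emph{wrap-around} in the descent: once the lowest level is forced to vanish and one divides by $p$, the recycled variables reappear at height $d_k$ in form $F_k$, and one must rule out that these recycled contributions conspire with higher-level variables to form a nontrivial common zero. This is exactly the phenomenon that, for a single form, pins the extremal count at $d^2$ (Davenport--Lewis), and it is what breaks the naive anisotropy count. The crux is therefore to show that for two or more incommensurate degrees no single valuation shift can align the cancellations across all forms simultaneously, so the surplus of variables beyond $\sum_k d_k^2$ is genuine. Finally, to obtain infinitely many tuples I would replace each $d_k$ by $p^{t}d_k$ for $t=0,1,2,\ldots$; this rescales every valuation period uniformly and multiplies both $s$ and $\sum_k d_k^2$ by $p^{2t}$, thereby preserving the strict inequality and furnishing infinitely many distinct degree-tuples for which AC fails over $\dbQ_p$.
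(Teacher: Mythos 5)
Your outline correctly anticipates the general shape of the paper's construction (degrees that are multiples of $\phi(p^h)=p^{h-1}(p-1)$, variables arranged in a tower of valuation levels with coefficients that are powers of $p$, and a normalisation to the first level containing a $p$-adic unit), but the step you yourself flag as the crux is exactly where the proposal has no content, and the mechanism you propose for it cannot supply that content. Reduction modulo $p$ is structurally too weak: with all degrees divisible by $p-1$, every unit contributes its coefficient essentially unchanged to every form, so blocking a level modulo $p$ amounts to requiring that no nonempty subset of the coefficient vectors in $\dbF_p^r$ sums to zero, and by Olson's evaluation of the Davenport constant such zero-sum-free families have size at most $r(p-1)$ --- \emph{linear} in $r$ and $p$. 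A tower whose depth is comparable to the degrees then caps the total variable count at something linear in $\max_k d_k$, far below the threshold $\sum_k d_k^2$, so no accounting of ``incommensurate shifts'' can rescue the count; indeed no soft general-position argument about distinct degrees can work, since AC is actually \emph{true} for pairs of diagonal forms of distinct odd degrees (Knapp), as the paper notes. The paper's engine is entirely different and genuinely quantitative: it imposes the congruences modulo the very high power $p^{(h+1)M}$ and proves a $p$-adic interpolation lemma (Lemmata \ref{lemma2.1} and \ref{lemma2.2}, strengthening Lewis--Montgomery). Writing each unit as a power of a primitive root $g$ and setting $f(z)=\sum_n z^{a_n}$, the hypotheses $S_{h,m}(\bfx)\equiv 0 \mmod{p^{(h+1)M}}$ for the $r$ exponents $m\in\calM\subset[M,2M)$ become $f(g^{qm})\equiv 0$ at $r$ points all congruent to $1$ modulo $p^h$, and interpolation forces $\text{ord}\,f(1)=\text{ord}\,N\ge rh$, i.e.\ $p^{rh}\mid N$. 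This divisibility is \emph{exponential} in $r$, and that is precisely what defeats the quadratic Artin bound once $r\ge 2$ and $h$ is large; nothing in a mod-$p$ subset-sum count produces any growth in $r$ at all, since all the forms collapse to essentially the same condition there.

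Your closing step for infinitude is also incorrect as stated. Replacing $d_k$ by $p^t d_k$ while keeping the same coefficient array does preserve anisotropy (a zero of the new system yields a zero of the old via $y_j=x_j^{p^t}$), but it leaves the number of variables $s$ unchanged while multiplying $\sum_k d_k^2$ by $p^{2t}$, so the inequality $s>\sum_k d_k^2$ needed to contradict AC moves \emph{further} out of reach; asserting that $s$ also scales by $p^{2t}$ amounts to assuming a fresh construction at the larger degrees, which is the unproven main claim again. The paper instead gets infinitude by letting $h\to\infty$ with $\calM$ fixed: for each large $h$ the system \eqref{1.1} has $Ws$ variables with $W=[\phi(p^h)/(h+1)]\ge p^{h-2}/(h+1)$, is insoluble for $s<p^{rh}$ by Theorem \ref{theorem1.2}, and the failure condition $p^{rh}>4(h+1)M^3p^{h+2}$ holds for all sufficiently large $h$ whenever $r\ge 2$, yielding infinitely many distinct degree tuples $(\phi(p^h)m)_{m\in\calM}$.
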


There has been much work in the recent literature concerning systems of diagonal equations and their relation to AC (see
 \cite{BG1999,BG2002,Kna2001,Kna2007a, Kna2007b,Kna2009,Kna2012}). In particular, it is known that AC holds for pairs of
 diagonal forms of equal degree $k$, except possibly when $k$ takes the form $p^\tau(p-1)$ or $3\cdot 2^\tau$ ($p=2$), 
and that AC holds also for pairs of diagonal forms of distinct odd degrees. While our theorem rules out the validity of AC in 
general for systems of two or more diagonal forms, it remains conceivable that AC holds for systems of odd degree.\par

In order to establish our theorem, we adapt the strategy of Lewis and Montgomery \cite{LM1983} to obtain a strengthened 
$p$-adic interpolation lemma. Let $p$ be a prime number and $h$ a natural number, and write $\phi(p^h)=p^{h-1}(p-1)$. 
When $N\ge 1$, define $S_{h,m}(\bfx)$ by putting
$$S_{h,m}(x_1,\ldots ,x_N)=\sum_{i=1}^Nx_i^{\phi(p^h)m}.$$
Then by enhancing the argument of \cite[Lemma 2]{LM1983}, we are able to establish the following conclusion.

\begin{theorem}\label{theorem1.2}
Let $p$ be a prime number and $h\in \dbN$. When $p=2$, suppose in addition that $h\ge 3$. Let $M$ be a positive 
integer, and consider a set $\calM$ of $r$ integers in the interval $[M,2M)$. Write $W=[\phi(p^h)/(h+1)]$. Finally, 
suppose that there is a non-zero solution $\bfx\in \dbQ_p^{Ws}$ to the system of equations
\begin{equation}\label{1.1}
\sum_{l=0}^{W-1}p^{l(h+1)M}S_{h,m}(x_{1l},\ldots ,x_{sl})=0\quad (m\in \calM).
\end{equation}
Then one has $s\ge p^{rh}$.
\end{theorem}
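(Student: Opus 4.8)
The plan is to argue by contraposition, reducing a nontrivial solution to a divisibility statement about sums of principal units, and to prove that statement by a $p$-adic descent. First I would exploit homogeneity: each equation in \eqref{1.1} is homogeneous in the variables of a fixed level, so after scaling I may assume all $x_{il}\in\dbZ_p$ with at least one a unit. The numerical input I would lean on is that the hypotheses force $\phi(p^h)\ge h+1$ — this is precisely what $h\ge 3$ secures when $p=2$ — so that $W\ge 1$ and $W(h+1)\le\phi(p^h)$. Consequently any variable with $v_p(x_{il})\ge 1$ contributes to the $m$-th equation a term of valuation at least $l(h+1)M+\phi(p^h)M\ge W(h+1)M$; hence modulo $p^{W(h+1)M}$ every non-unit variable disappears, and only the unit variables survive.

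Next I would isolate a single level. Let $l^\ast$ be the least level carrying a unit column (it exists by the normalization, so the set $U$ of unit columns at level $l^\ast$ is nonempty). Reducing modulo $p^{W(h+1)M}$, dividing through by $p^{l^\ast(h+1)M}$, and reducing once more modulo $p^{(h+1)M}$, the weighted separation of the levels (the gap $(h+1)M$) strips away all other levels. Writing $\ome_i=x_{il^\ast}^{\phi(p^h)}\in 1+p^h\dbZ_p$ for $i\in U$, I obtain
\[ \sum_{i\in U}\ome_i^{m}\equiv 0 \pmod{p^{(h+1)M}}\qquad (m\in\calM). \]
Since $|U|\le s$, the theorem reduces to showing that $p^{rh}$ divides $|U|$.

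Thus everything rests on the following interpolation lemma, the enhancement of \cite[Lemma 2]{LM1983}: if $\ome_1,\dots,\ome_N\in 1+p^h\dbZ_p$ satisfy $\sum_i\ome_i^{m}\equiv 0\pmod{p^{(h+1)M}}$ for the $r$ integers $m\in\calM$, then $p^{rh}\mid N$. Writing $\ome_i=1+p^h\beta_i$, one has $\ome_i^{m}=\sum_{k\ge 0}\binom{m}{k}p^{hk}\beta_i^{k}$, whence $\sum_i\ome_i^{m}=\sum_{k\ge 0}c_k\,p^{hk}\binom{m}{k}$, where $c_0=N$ and $c_k=\sum_i\beta_i^{k}$ are power sums. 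I would induct on $r$, using each point of $\calM$ to peel off one factor $p^h$: the case $r=1$ is immediate, since every term with $k\ge 1$ has valuation at least $h$, forcing $N=c_0\equiv 0\pmod{p^h}$.

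The hard part is the inductive step, and this is the step I expect to be most delicate. The naive move — invert the Vandermonde matrix $[\binom{m_j}{k}]$ to solve for $c_0$ — fails, because the points of $\calM$ need not be distinct modulo $p$ (indeed $r$ may exceed $p$), so the determinant $\prod_{i<j}(m_i-m_j)$ is $p$-divisible and the inversion leaks powers of $p$; pure linear algebra then yields only $v_p(N)\ge rh-v_p(\det)$, which is too weak. The resolution must use two features the linear algebra ignores: first, the $c_k$ are genuine power sums, so $\bigl(\sum_i\ome_i^{m}\bigr)_m$ is a linear recurrence sequence of order at most $N$ whose characteristic roots $\ome_i$ all lie in $1+p^h\dbZ_p$; and second, exactly when two points collide modulo $p$ the difference $\ome_i^{m}-\ome_i^{m'}$ gains the extra divisibility $p^{\,h+v_p(m-m')}$, so the powers of $p$ lost to the divided-difference denominators are precisely restored by the multiplicative structure. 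Making this trade-off quantitative — and verifying that the modulus $(h+1)M$, guaranteed large by the $W$-level construction, always absorbs the denominators — is the crux; I expect the $p=2$ case to need separate care, since the $\exp$/$\log$ parametrisation of $1+2^h\dbZ_2$ underlying the interpolation is only available for $h\ge 3$.
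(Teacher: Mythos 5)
Your reduction of the theorem to a single-level congruence is correct and is essentially the paper's own: normalise $\bfx\in\dbZ_p^{Ws}$, take the least level $l^\ast$ carrying a unit, use $\phi(p^h)-(h+1)(W-1)\ge h+1$ to strip the remaining levels, and arrive at $\sum_{i\in U}\ome_i^m\equiv 0\mmod{p^{(h+1)M}}$ with $1\le |U|\le s$; this is exactly the paper's passage from (\ref{1.1}) to the hypothesis (\ref{2.1}) of Lemma \ref{lemma2.2}. The genuine gap is that the interpolation lemma itself --- which carries the entire content of the theorem --- is stated but not proved. Your induction on $r$ is carried out only for $r=1$; for the inductive step you offer a diagnosis (``the resolution must use\ldots'', ``making this trade-off quantitative\ldots is the crux'') rather than an argument. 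Nor is the sketched peeling obviously repairable in the form given: differencing at two points $m,m'\in\calM$ produces $\sum_{i\in U}\ome_i^{m'}\bigl(\ome_i^{m-m'}-1\bigr)$, and after extracting $p^{h+\text{ord}(m-m')}$ the resulting expression is no longer a power sum of elements of $1+p^h\dbZ_p$ with unit coefficients, so the inductive hypothesis does not apply to it; in particular nothing in your sketch ever forces the \emph{count} $N=c_0$ to absorb more than a single factor $p^h$.

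The missing quantitative engine is supplied in the paper by Lemma \ref{lemma2.1}, the mod-$p^h$ generalisation of the Lewis--Montgomery interpolation lemma: if $f\in\dbZ[z]$ vanishes modulo $p^{M'}$ at $K$ distinct points $n_k\equiv a\mmod{p^h}$, then $\text{ord}\,f(a)\ge\min\{Kh,(K-1)h-L+M'\}$, where $L$ bounds the valuations of the products $\prod_{j\ne k}(n_j-n_k)$. Crucially, this is applied not to your power sums $c_k$ but to $f(z)=\sum_n z^{a_n}$, where a primitive-root parametrisation $x_n\equiv g^{a_n}$ converts the count of variables into the polynomial value $f(1)=N$; the interpolation points are $n_k=g^{qm_k}$, for which $\text{ord}(n_j-n_k)=h+\text{ord}(m_j-m_k)$ exactly, and the accumulated denominators are bounded using $\calM\subseteq[M,2M)$ via $\text{ord}\prod_{r\ne m}(r-m)\le \text{ord}\bigl((M-1)!\bigr)\le (M-1)/(p-1)$, whence $(K-1)h-L+(h+1)M\ge hM\ge hK$. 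This also shows your pessimism about ``pure linear algebra'' to be misplaced: divided-difference interpolation does succeed, precisely because the modulus $(h+1)M$ exceeds the factorial losses with $hM\ge hr$ to spare --- but this computation, the multiplicative parametrisation, and the separate two-adic treatment (odd squares generated by $5$, $x_n^2\equiv 5^{a_n}$, where the hypothesis $h\ge 3$ is invoked within the interpolation argument, not only to secure $W\ge 1$) are exactly the steps your proposal leaves open.
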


Note that the number of variables in the system (\ref{1.1}) is $Ws$. Also, when $h\ge 5$, the integer $W$ occurring in the 
statement of Theorem \ref{theorem1.2} satisfies
$$W\ge (p^{h-1}-h-1)/(h+1)\ge p^{h-2}/(h+1).$$
The validity of AC for the system (\ref{1.1}) would imply that a non-zero solution $\bfx\in \dbQ_p^{Ws}$ exists whenever
$$Ws\ge 4M^3p^{2h}>\sum_{m\in \calM}(p^{h-1}(p-1)m)^2.$$
However, the conclusion of Theorem \ref{theorem1.2} implies that no solution exists when $s<p^{rh}$. Consequently, 
whenever
$$p^{rh}>\frac{4M^3p^{2h}}{p^{h-2}/(h+1)}=4(h+1)M^3p^{h+2},$$
one finds that AC fails for the system (\ref{1.1}), and such is the case whenever $r>1$ and $h$ is sufficiently large in terms of $M$. The conclusion of Theorem \ref{theorem1.1} follows at once.\par

Throughout this note, we write $[\tet]$ for the greatest integer not exceeding $\tet$. 

\section{The proof of Theorem \ref{theorem1.2}.}
We begin by describing a variant of a result on $p$-adic interpolation provided by Lewis and Montgomery (see 
\cite[Lemma 1]{LM1983}). The latter authors established the case $h=1$ of the following conclusion. Here, as usual, when 
$\alp=p^ka/b\in \dbQ$, with $(a,p)=(b,p)=1$ and $k\in \dbZ$, we write $\text{ord}\, \alp$ for $k$.

\begin{lemma}\label{lemma2.1}
Let $a$ be an integer and let $n_1,\ldots ,n_K$ be distinct integers with $n_k\equiv a\mmod{p^h}$ $(1\le k\le K)$, for some 
$h\in \dbN$. Let $f\in \dbZ[z]$, and suppose that
$$f(n_k)\equiv 0\mmod{p^M}\quad (1\le k\le K).$$
Then one has
$$\text{ord}\, f(a)\ge \min \{Kh, (K-1)h-L+M\},$$
where
$$L=\max_{1\le k\le K}\Bigl\{ \text{ord}\Bigl( \prod_{\substack{j=1\\ j\ne k}}^K(n_j-n_k)\Bigr) \Bigr\}.$$
\end{lemma}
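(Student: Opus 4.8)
The plan is to prove this by induction on $K$, using a finite-difference or Lagrange-interpolation identity that expresses $f(a)$ in terms of the values $f(n_k)$ together with the "denominators" $\prod_{j\ne k}(n_j-n_k)$. The key algebraic input will be a Lagrange-type formula. Since $f$ has integer coefficients and $a$ is an integer, I want to bound $\mathrm{ord}\,f(a)$ from below; the values $f(n_k)$ are divisible by $p^M$, and the points $n_k$ are all congruent to $a$ modulo $p^h$, so each difference $a-n_k$ has $\mathrm{ord}\ge h$ and each $n_j-n_k$ likewise has $\mathrm{ord}\ge h$. The two terms in the minimum, namely $Kh$ and $(K-1)h-L+M$, strongly suggest a base-case versus inductive-step dichotomy: the term $Kh$ is what one gets purely from congruence information (when the hypothesis $f(n_k)\equiv 0$ is not yet exploited), while $(K-1)h-L+M$ is what the divisibility by $p^M$ buys once the interpolation denominators are paid for.

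Concretely, I would first recall the divided-difference identity. For distinct points $n_1,\dots,n_K$ and any polynomial $f$, Lagrange interpolation gives
\begin{equation*}
f(a)=\sum_{k=1}^{K}f(n_k)\prod_{\substack{j=1\\ j\ne k}}^{K}\frac{a-n_j}{n_k-n_j}+\Bigl(\text{remainder involving }\prod_{k=1}^{K}(a-n_k)\Bigr).
\end{equation*}
The remainder term is the crucial piece: because $f$ has degree possibly exceeding $K-1$, I would instead write $f(a)$ via the Newton forward-difference formula, so that $f(a)=\sum_{k}c_k\prod_{j<k}(a-n_j)$ where the coefficients $c_k$ are divided differences of $f$. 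Each product $\prod_{j<k}(a-n_j)$ has $\mathrm{ord}\ge (k-1)h$ since every factor is $\equiv 0\mmod{p^h}$, which is precisely the source of the $Kh$ bound in the regime where the $f(n_k)$ do not help. The harder bookkeeping is controlling the divided differences $c_k=[n_1,\dots,n_k]f$: these are $\mathbb{Z}$-linear combinations of the $f(n_i)$ divided by products of the $n_i-n_j$, and the hypothesis $f(n_i)\equiv 0\mmod{p^M}$ must be weighed against the loss of up to $L$ in $\mathrm{ord}$ coming from those denominators.

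I expect the main obstacle to be the careful $p$-adic valuation accounting for the divided differences, in particular matching the single-parameter $L$ (a maximum over $k$ of the order of one product of differences) against the genuinely multi-point structure of the divided-difference denominators. The cleanest route is probably the induction: assume the bound for $K-1$ points, then relate the $K$-point divided difference to the $(K-1)$-point ones via the recurrence $[n_1,\dots,n_K]f=\bigl([n_2,\dots,n_K]f-[n_1,\dots,n_{K-1}]f\bigr)/(n_K-n_1)$, tracking how the $p^M$ divisibility and the $\mathrm{ord}$ of the telescoping denominator propagate. At each stage I would split into the two cases according to whether the $Kh$ term or the $(K-1)h-L+M$ term is the binding constraint, verifying that the minimum is preserved. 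Since this is stated as the $h\ge 1$ generalization of the $h=1$ case from Lewis--Montgomery, I anticipate the structure of their argument carries over with $p^h$ replacing $p$ throughout, the only real subtlety being the uniform treatment of the denominator orders via $L$ rather than individual tracking.
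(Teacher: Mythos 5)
Your core identity is the right one, and it is in fact the same mechanism as the paper's source: the paper proves this lemma by deferring to Lewis and Montgomery \cite{LM1983}, whose Lemma 1 argument is precisely interpolation-with-remainder plus $p$-adic bookkeeping. But as written your plan has two concrete gaps. First, you attribute the $Kh$ term in the minimum to the Newton monomials $\prod_{j<k}(a-n_j)$ ``in the regime where the $f(n_k)$ do not help''. That is a misdiagnosis: those monomials never occur alone --- each comes multiplied by a divided difference $c_k=[n_1,\ldots,n_k]f$, a rational number of possibly negative order, so the terms with $k\le K$ feed the $(K-1)h-L+M$ bound, not the $Kh$ bound. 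The $Kh$ term comes from the part of $f$ that the $K$ nodes cannot see, and to control it you need the one step your sketch never states: divide $f$ by the \emph{monic} integer polynomial $\prod_{j=1}^K(z-n_j)$, giving $f=Q\cdot \prod_{j=1}^K(z-n_j)+R$ with $Q,R\in \dbZ[z]$ and $\deg R<K$. Integrality of $Q$ is what legitimises $\text{ord}\bigl(Q(a)\prod_{j=1}^K(a-n_j)\bigr)\ge Kh$; without it the high-degree part of $f$ is uncontrolled, since with only $K$ nodes the Newton expansion you write down does not represent $f$ exactly.

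Second, the divided-difference bookkeeping you yourself flag as ``the main obstacle'' is indeed a gap, and the induction you propose does not obviously close it: if you peel off one node via $f(z)=f(n_K)+(z-n_K)g(z)$, the inductive hypothesis for $g$ requires a uniform congruence $g(n_k)\equiv 0\mmod{p^{M'}}$, whereas $\text{ord}\,g(n_k)\ge M-\text{ord}(n_k-n_K)$ varies with $k$, and the resulting parameter $L'$ for the smaller node set does not recombine with $L$ in any evident way; you would need a strengthened inductive statement, and you never verify that the recurrence preserves the shape of the bound. The direct repair is short and is exactly what Lewis and Montgomery do: since $R(n_k)=f(n_k)\equiv 0\mmod{p^M}$, apply Lagrange to $R$ itself, so that $R(a)=\sum_{k=1}^K f(n_k)\prod_{j\ne k}(a-n_j)/(n_k-n_j)$, where each summand has order at least $M+(K-1)h-L$; no divided differences or induction are needed. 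If you insist on the Newton form, the missing compensation is the inequality $\text{ord}\prod_{j\le k,\,j\ne i}(n_i-n_j)\le L-(K-k)h$, obtained by noting that each omitted factor $n_i-n_j$ with $j>k$ has order at least $h$; with that in hand, each Newton term has order at least $M-L+(K-k)h+(k-1)h=M-L+(K-1)h$, recovering the stated bound.
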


\begin{proof} One may follow the argument of the proof of \cite[Lemma 1]{LM1983} without serious modification to 
accommodate arbitrary values of $h$ in place of the special case $h=1$.
\end{proof}

We apply this conclusion to deduce a generalisation of \cite[Lemma 2]{LM1983}. The argument of our proof is very similar to 
that of the latter, but differs in enough detail that a complete account seems warranted.

\begin{lemma}\label{lemma2.2} Suppose that $p$ is a prime number and $h\in \dbN$. When $p=2$, suppose in addition 
that $h\ge 3$. Let $M$ be a positive integer, and let $\calM$ be a set of $K$ integers in the interval $[M,2M)$. Finally, suppose 
that there are $N$ integers $x_1,\ldots ,x_N$, not all divisible by $p$, such that
\begin{equation}\label{2.1}
S_{h,m}(x_1,\ldots ,x_N)\equiv 0\mmod{p^{(h+1)M}}\quad (m\in \calM).
\end{equation}
Then one has $N\ge p^{Kh}$.
\end{lemma}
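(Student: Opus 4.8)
The plan is to reduce the statement to a counting problem about $p$-adic units and then extract divisibility of the unit-count from the interpolation estimate of Lemma~\ref{lemma2.1}. First I would discard the variables divisible by $p$: if $p\mid x_i$, then $\mathrm{ord}\,x_i^{\phi(p^h)m}\ge \phi(p^h)M=p^{h-1}(p-1)M$, and since $p^{h-1}(p-1)\ge h+1$ for all admissible pairs (the inequality failing only for $(p,h)=(2,1)$ and $(2,2)$, which is precisely why the hypothesis demands $h\ge 3$ when $p=2$), every such term is $\equiv 0\pmod{p^{(h+1)M}}$ and drops out of \eqref{2.1}. Thus it suffices to bound the number $N_1\ge 1$ of units among $x_1,\ldots,x_N$, since $N\ge N_1$. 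For a unit $x_i$ I set $y_i=x_i^{\phi(p^h)}$, so that Euler's theorem gives $y_i\equiv 1\pmod{p^h}$ and $S_{h,m}(\bfx)=\sum_i y_i^m$ over the units. The target becomes: if $\sum_i y_i^m\equiv 0\pmod{p^{(h+1)M}}$ for every $m\in\calM$ with each $y_i\equiv 1\pmod{p^h}$, then $p^{Kh}\mid N_1$, whence $N_1\ge p^{Kh}$.

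The base case $K=1$ is immediate, since $y_i^m\equiv 1\pmod{p^h}$ forces $N_1\equiv\sum_i y_i^m\equiv 0\pmod{p^h}$. For the general case I would organise an induction on $K$ built on a refinement of the residues of the $y_i$ modulo ascending powers $p^h,p^{2h},\ldots,p^{Kh}$, the aim being to show that at each level the occupied residue classes force an additional factor $p^h$ into $N_1$. Writing $y_i=1+p^h t_i$ and expanding binomially, one has $\sum_i y_i^m\equiv\sum_{0\le j<K}\binom{m}{j}p^{hj}T_j\pmod{p^{Kh}}$ with $T_j=\sum_i t_i^j$ and $T_0=N_1$, the tail $j\ge K$ being harmless because $hK\le(h+1)M$ (as $K\le M$). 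The $K$ congruences thus become a triangular linear system in $T_0,\ldots,T_{K-1}$, and the crux is to read off that $p^{Kh}\mid T_0$. A crude application of Cramer's rule loses the valuation of a generalised Vandermonde determinant, a loss caused by $p$-adic clustering of the exponents $m$; it is exactly this loss that Lemma~\ref{lemma2.1} is designed to repair.

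The clean way to deploy Lemma~\ref{lemma2.1} is in the variable of the \emph{values}: the distinct $y_i$ are all congruent to $1$ modulo $p^h$, so they furnish nodes sharing the common residue $a=1$, and the vanishing order available is the enormous modulus $p^{(h+1)M}$. Feeding an auxiliary polynomial formed from the $K$ power-sum relations into Lemma~\ref{lemma2.1} then yields a bound of the shape $\mathrm{ord}\ge\min\{Dh,(D-1)h-L+(h+1)M\}$, where $D$ counts the nodes in play and $L$ is the clustering penalty. Because $(h+1)M$ is so large relative to $h$, the minimum is realised by the first term $Dh$, so the estimate is robust precisely in the regime $h\gg M$ that the application of Theorem~\ref{theorem1.2} requires, and each level of residue refinement contributes its factor of $p^h$ cleanly. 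Accumulating these factors over the $K$ levels should deliver $p^{Kh}\mid N_1$.

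The hard part will be the interface between the $K$ \emph{weighted} congruences $\sum_i y_i^m\equiv 0$ and the \emph{pointwise} vanishing at distinct nodes that Lemma~\ref{lemma2.1} demands: one must construct the auxiliary polynomial so that its evaluations at the distinct values encode the power-sum data, and then carry out the combinatorial bookkeeping that converts the resulting order bound into the exact exponent $Kh$ rather than something smaller. Controlling the denominators $j!$ in the binomial coefficients and confirming that the truncation modulo $p^{Kh}$ preserves all the divisibility being tracked are the routine-but-delicate checks, while the genuine obstacle is ensuring the clustering term $L$ is absorbed without degrading the count $K$. This is the step where the passage from the case $h=1$ of \cite[Lemma 2]{LM1983} to general $h$ must be pushed through with care, and where the separate attention to $p=2$ (inherited both from the unit reduction and from Lemma~\ref{lemma2.1}) enters.
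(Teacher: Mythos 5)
Your opening reduction is correct (discarding the $x_i$ divisible by $p$, with the right explanation of why $h\ge 3$ is needed when $p=2$: one needs $\phi(p^h)\ge h+1$), but the core of the proposal has a genuine gap: you never produce the device that converts the $K$ weighted power-sum congruences into the pointwise vanishing that Lemma \ref{lemma2.1} requires, and the configuration you do propose cannot work. You suggest taking as interpolation nodes the distinct \emph{values} $y_i\equiv 1\mmod{p^h}$, with $D$ nodes, hoping for a bound $\text{ord}\ge \min\{Dh,\dots\}$. But $D$ is the number of distinct $y_i$, which is completely uncontrolled by the hypotheses --- it can equal $1$ --- and bears no relation to $K$, so no bound of the form $Kh$ can emerge this way; moreover no polynomial vanishing at the $y_i$ is furnished by \eqref{2.1}, which is exactly the ``interface'' problem you flag as the hard part and leave unresolved. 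The paper's key idea, absent from your sketch, is a discrete-logarithm inversion of roles: writing each unit as $x_n\equiv g^{a_n}\mmod{p^{(h+1)M}}$ for a primitive root $g$, one sets $f(z)=\sum_{n=1}^N z^{a_n}$, so that $S_{h,m}(\bfx)=f(g^{qm})$ with $q=\phi(p^h)$. The variables become \emph{exponents} of $f$, and the nodes are the $K$ points $n_k=g^{qm_k}$, which are distinct, all $\equiv 1\mmod{p^h}$, and at which \eqref{2.1} says literally that $f$ vanishes mod $p^{(h+1)M}$; Lemma \ref{lemma2.1} then bounds $\text{ord}\,f(1)=\text{ord}\,N\ge\min\{Kh,(K-1)h-L+(h+1)M\}$ with the correct count $K$. (For $p=2$ the same device runs through $\pm 5^a$: one uses $x_n^2\equiv 5^{a_n}$ and nodes $5^{m_kq/2}$, which your sketch does not address.)

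A secondary confusion: you assert the minimum is realised by the first term ``in the regime $h\gg M$,'' but the lemma holds for all $h$ and $M$, and the actual mechanism is an estimate of the clustering term $L$. Since $\text{ord}(g^{qr}-g^{qm})=h+\text{ord}(r-m)$, one has $L\le (K-1)h+\text{ord}((M-1)!)\le (K-1)h+(M-1)/(p-1)$ by the binomial-coefficient argument exploiting $\calM\subseteq [M,2M)$, whence the second term satisfies $(K-1)h-L+(h+1)M\ge (h+1-1/(p-1))M\ge hM\ge hK$ (using $K\le M$). So both terms of the minimum are $\ge Kh$ unconditionally; no asymptotic regime is involved, and absorbing $L$ is a clean computation once the nodes are $g^{qm_k}$ rather than the $y_i$. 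Your alternative triangular-system route via $y_i=1+p^ht_i$ and power sums $T_j$ is plausibly salvageable (it is morally a finite-difference version of Lemma \ref{lemma2.1}), but as written it is abandoned at exactly the point where the Vandermonde valuation loss must be controlled, so it does not constitute a proof either.
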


\begin{proof} Write $q=\phi(p^h)$ and suppose that $\calM=\{m_1,\ldots ,m_K\}$. We begin by considering the situation in 
which $p$ is an odd prime. Since $x_n$ plays no role in the congruences (\ref{2.1}) when $p|x_n$, we may suppose without loss that $(x_n,p)=1$ for $1\le n\le N$. Let $g$ be a primitive root modulo $p^2$, whence $g$ is primitive for all powers
 of $p$. For each index $n$, there exists an integer $a_n$ with $0\le a_n<\phi(p^{(h+1)M})$ having the property that 
$x_n\equiv g^{a_n}\mmod{p^{(h+1)M}}$. Put $f(z)=\sum_{n=1}^Nz^{a_n}$, and note that $f(1)=N$. By the primitivity of 
$g$, one sees that for $1\le k\le K$ the integers $n_k=g^{qm_k}$ are distinct and satisfy the congruence 
$n_k\equiv 1\mmod{p^h}$. Furthermore, by hypothesis, one finds that $f(g^{qm})\equiv 0\mmod{p^{(h+1)M}}$ for 
$m\in \calM$. We therefore deduce from Lemma \ref{lemma2.1} that
\begin{equation}\label{2.2}
\text{ord}\, N=\text{ord}\, f(1)\ge \min \{ Kh, (K-1)h-L+(h+1)M\},
\end{equation}
where
$$L=\max_{m\in \calM}\text{ord}\Bigl( \prod_{\substack{r\in \calM\\ r\ne m}}(g^{qr}-g^{qm})\Bigr) .$$

\par Next we observe that since $q=\phi(p^h)$, one has $\text{ord}(g^{qs}-1)=h+\text{ord}\, s$ for every natural number 
$s$, whence $\text{ord}(g^{qr}-g^{qm})=h+\text{ord}(r-m)$. But when $m\in \calM$, just as in the argument of the proof 
of \cite[Lemma 2]{LM1983}, a consideration of binomial coefficients reveals that
$$\text{ord}\Bigl( \prod_{\substack{r\in \calM\\ r\ne m}}(r-m)\Bigr) \le \text{ord}\left( (m-M)!(2M-m-1)!\right) \le \text{ord}\left( (M-1)!\right) .$$
Hence we obtain
$$L\le (K-1)h+\sum_{j=1}^\infty \left[ \frac{M-1}{p^j}\right] \le (K-1)h+\frac{M-1}{p-1}.$$
One therefore has
$$(K-1)h-L+(h+1)M\ge \left( h+1-1/(p-1)\right) M\ge hM\ge hK.$$
Thus we conclude from (\ref{2.2}) that $\text{ord}\, N\ge hK$, so that $p^{hK}|N$.\par

When $p=2$, we modify the above argument by using the fact that the reduced residues are generated by $-1$ and $5$ 
modulo any power of $2$. Thus, for each index $n$, there exists an integer $a_n$ with 
$0\le a_n<\frac{1}{2}\phi(2^{(h+1)M})$ having the property that $x_n^2\equiv 5^{a_n}\mmod{2^{(h+1)M}}$. Put $f(z)=\sum_{n=1}^Nz^{a_n}$ and note once more 
that $f(1)=N$. For $1\le k\le K$, the integers $n_k=5^{m_kq/2}$ are distinct and satisfy the congruence 
$n_k\equiv 1\mmod{2^h}$. Further, by hypothesis, for $1\le k\le K$ one has
$$f(5^{m_kq/2})=\sum_{n=1}^Nx_n^{qm_k}\equiv 0\mmod{2^{(h+1)M}}.$$
It therefore follows that $f(n_k)\equiv 0\mmod{2^{(h+1)M}}$ for $1\le k\le K$. As before, we find from Lemma \ref{lemma2.1} 
that the lower bound (\ref{2.2}) holds, though in the present circumstances in which $p=2$, one has $L\le (K-1)h+M-1$. 
Consequently, we deduce on this occasion that since $h\ge 3$, one has
$$(K-1)h-L+(h+1)M\ge hM\ge hK.$$
Thus we conclude that $2^{hK}|N$, thereby completing the proof of the lemma.
\end{proof}

We are now equipped to prove Theorem \ref{theorem1.2}. Suppose if possible that $s<p^{rh}$ and that the equations 
(\ref{1.1}) have a non-zero simultaneous solution $\bfx\in \dbQ_p^{Ws}$. By homogeneity, we may suppose that 
$\bfx\in \dbZ_p^{Ws}$, and further that $p\nmid x_{ju}$ for some indices $j$ and $u$ with $1\le j\le s$ and $0\le u<W$. 
For some index $l$ with $0\le l<W$, one has $p|x_{iu}$ for $u<l$ and $1\le i\le s$, and further 
$p\nmid x_{jl}$ for some index $j$ with $1\le j\le s$. For each $m\in \calM$, we thus have
$$S_{h,m}(x_{1u},\ldots ,x_{su})\equiv 0\mmod{p^{\phi(p^h)M}}\quad (u<l).$$
On the other hand, since
$$\phi(p^h)-(h+1)l\ge \phi(p^h)-(h+1)(W-1)\ge h+1,$$
we find from (\ref{1.1}) that there is a solution $\bfx_l\in \dbZ_p^s$ of the simultaneous congruences
$$S_{h,m}(x_{1l},\ldots ,x_{sl})\equiv 0\mmod{p^{(h+1)M}}\quad (m\in \calM)$$
in which $p\nmid x_{jl}$ for some index $j$ with $1\le j\le s$. In particular, there is a non-zero $s$-tuple $\bfy\in \dbZ^s$ 
with $\bfy\equiv \bfx_l\mmod{p^{(h+1)M}}$ such that
$$S_{h,m}(\bfy)\equiv 0\mmod{p^{(h+1)M}}\quad (m\in \calM).$$
Since $s<p^{rh}$, it follows from Lemma \ref{lemma2.2} that one must have $p|y_i$, and hence also $p|x_{il}$, for 
$1\le i\le s$. This contradicts our earlier assumption, and so we reach a contradiction. We are therefore forced to conclude that 
$s\ge p^{rh}$, and this completes the proof of Theorem \ref{theorem1.2}.

\bibliographystyle{amsbracket}

\begin{thebibliography}{18}

\bibitem{AK1981}
G. I. Arkhipov and A. A. Karatsuba, \emph{Local representation of zero by a form}, Izv. Akad. Nauk SSSR Ser. Mat. \textbf{45} 
(1981), 948--961, 1198. 

\bibitem{Art1965}
E. Artin, \emph{The collected papers of Emil Artin}, Addison-Wesley, Reading, MA, 1965.

\bibitem{AK1965}
J. Ax and S. Kochen, \emph{Diophantine problems over local fields, I}, Amer. J. Math. \textbf{87} (1965), 605--630.

\bibitem{Bro1978}
 S. S. Brown, \emph{Bounds on transfer principles for algebraically closed and complete discretely valued fields}, Mem. 
Amer. Math. Soc. \textbf{15} (1978), no. 204, iv+92 pp.

\bibitem{Bro1984}
W. D. Brownawell, \emph{On p-adic zeros of forms}, J. Number Theory \textbf{18} (1984), 342--349.

\bibitem{BG1999}
J. Br\"udern and H. Godinho, \emph{On Artin's conjecture, I: Systems of diagonal forms}, Bull. London Math. Soc. \textbf{31} 
(1999), 305--313.

\bibitem{BG2002}
J. Br\"udern and H. Godinho, \emph{On Artin's conjecture, II: Pairs of additive forms}, Proc. London Math. Soc. (3) 
\textbf{84}, 513--538.

\bibitem{DL1963}
H. Davenport and D. J. Lewis, \emph{Homogeneous additive equations}, Proc. Roy. Soc. Ser. A \textbf{274} (1963), 443--460.

\bibitem{Dem1956}
V. B. Dem'yanov, \emph{Pairs of quadratic forms over a complete field with discrete norm with a finite field of residue classes}, Izv. Akad. Nauk SSSR Ser. Mat. \textbf{20} (1956), 307--324.

\bibitem{Has1924}
H. Hasse, \emph{Darstellbarkeit von Zahlen durch quadratische Formen in einem beliebigen algebraischen Zahlk\"orper}, 
J. Reine Angew. Math. \textbf{153} (1924), 113--130.

\bibitem{Kna2001}
M. P. Knapp, \emph{Systems of diagonal equations over $p$-adic fields}, J. London Math. Soc. (2) \textbf{63} (2001), 257--267.

\bibitem{Kna2007a}
M. P. Knapp, \emph{Diagonal equations of different degrees over $p$-adic fields}, Acta Arith. \textbf{126} (2007), 139--154.

\bibitem{Kna2007b}
M. P. Knapp, \emph{On systems of diagonal forms}, J. Austral. Math. Soc. \textbf{82} (2007), 221--236.

\bibitem{Kna2009}
M. P. Knapp, \emph{On systems of diagonal forms, II}, Math. Proc. Cambridge Philos. Soc. \textbf{147} (2009), 31--45.

\bibitem{Kna2012}
M. P. Knapp, \emph{Pairs of additive forms of odd degrees}, Michigan Math. J. \textbf{61} (2012), 493--505.

\bibitem{Lan1952}
S. Lang, \emph{On quasi algebraic closure}, Ann. of Math. (2) \textbf{55} (1952), 373--390.

\bibitem{Lew1952}
D. J. Lewis, \emph{Cubic homogeneous polynomials over p-adic number fields}, Ann. of Math. (2) \textbf{56} (1952), 
473--478. 

\bibitem{LM1983}
D. J. Lewis and H. L. Montgomery, \emph{On zeros of p-adic forms}, Michigan Math. J. \textbf{30} (1983), 83--87.

\bibitem{Ter1966}
G. Terjanian, \emph{Un contre-exemple \`a une conjecture d'Artin}, C. R. Acad. Sci. Paris S\'er. A-B \textbf{262} (1966), A612.

\end{thebibliography}
\providecommand{\bysame}{\leavevmode\hbox to3em{\hrulefill}\thinspace}

\end{document}